\renewcommand*{\backref}[1]{}
\renewcommand*{\backrefalt}[4]{%
    \ifcase #1 (Not cited.)%
    \or        (Cited on page~#2.)%
    \else      (Cited on pages~#2.)%
    \fi}
\renewcommand{\phi}{\varphi}
\newcommand{\Sp}[1]{\mathrm{Sp}{(#1)}}
\numberwithin{equation}{section}
\def\eqref#1{(\ref{#1})}
\newcommand{\C}{{\mathbb C}}
\newcommand{\R}{{\mathbb R}}
\def\1{\sqrt{-1}\:}
\newcommand{\restrict}[1]{{\left|_{{\phantom{|}\!\!}_{#1}}\right.}}
\newcommand{\cntrct}                
{\hspace{2pt}\raisebox{1pt}{\text{$\lrcorner$}}\hspace{2pt}}
\newcommand{\cad}{\mathcal{D}}
\renewcommand{\bar}{\overline}
\renewcommand{\phi}{\varphi}
\renewcommand{\epsilon}{\varepsilon}
\renewcommand{\dim}{\operatorname{dim}}
\newcommand{\eg}{{\em e.g. }}
\newcounter{Mycounter}[section]
\newcounter{lemma}[section]
\newcounter{claim}[section]
\newcounter{sublemma}[section]
\newcounter{corollary}[section]
\newcounter{theorem}[section]
\newcounter{conjecture}[section]
\newcounter{proposition}[section]
\newcounter{definition}[section]
\newcounter{example}[section]
\newcounter{remark}[section]
\newcounter{problem}[section]
\newcounter{question}[section]
\title[A characterization of compact LCHK manifolds]{A characterization  of compact locally conformally hyperk\" ahler manifolds}
\author{Liviu Ornea}
\address[Liviu Ornea]{University of Bucharest, Faculty of Mathematics and Informatics, 14 Academiei Str., Bucharest, Romania, AND\newline
Institute of Mathematics ``Simion Stoilow" of the Romanian Academy, 21,
Calea Grivitei Street, 010702, Bucharest, Romania}
\author{Alexandra Otiman} 
\thanks{Both authors are partially supported by a grant of Ministry of Research and Innovation, CNCS - UEFISCDI,
project number PN-III-P4-ID-PCE-2016-0065, within PNCDI III}
\address[Alexandra Otiman]{
Institute of Mathematics ``Simion Stoilow" of the Romanian Academy, 21,
Calea Grivitei Street, 010702, Bucharest, Romania, AND\newline 
Max Planck Institut f\"ur Mathematik, Vivatsgasse 7, 53111 Bonn, Germany, AND\newline 
University of Bucharest, Research Center in Geometry, Topology and Algebra, Faculty of Mathematics and Informatics, 14 Academiei Str., Bucharest, Romania}
\email{lornea@fmi.unibuc.ro, liviu.ornea@imar.ro}
\email{aotiman@mpim-bonn.mpg.de, alexandra\_otiman@yahoo.com}
\begin{document}

\begin{abstract}
We give an equivalent definition of compact locally conformally hyperk\"ahler manifolds in terms of the existence of a nondegenerate complex two-form with natural properties. This is a conformal analogue of Beauville's stating that a compact K\"ahler manifold admitting a holomorphic symplectic form is hyperk\"ahler.\\

\noindent{\sc 2010 Mathematics Subject Classification:} 53C55, 53C26.\\
{\sc Keywords:} locally conformally K\"ahler, locally conformally hyperk\"ahler, Weyl connection, holonomy, Weitzenb\"ock formula. 
\end{abstract}

\maketitle
\section{Introduction}

A complex manifold $(M,J)$ is called {\em locally conformally K\"ahler} (LCK for short) if it admits a Hermitian metric $g$ such that the two form $\omega(X,Y):=g(JX,Y)$ satisfies the integrability condition
$$d\omega=\theta\wedge\omega$$
with respect to a {\em closed} one-form $\theta$, called the {\em Lee form}.

It is then immediate that locally, the metric $g$ is conformal to some local K\"ahler metrics $g'_U:=e^{-f_U}g\restrict{U}$, where $\theta\restrict{U}=df_U$.

An equivalent definition requires that the universal cover $(\tilde M, J)$ of $(M,J)$ admits a K\"ahler metric with respect to which the deck group acts by holomorphic homotheties, see \cite{do}. This K\"ahler metric on $\tilde M$, which is globally conformal with the pull-back of the LCK metric $g$, is in fact obtained by gluing the pulled-back local K\"ahler metrics $g'_U$. Note that, admitting homotheties, the universal cover of a LCK manifold is never compact.


There are  many examples of LCK manifolds: diagonal and non-diagonal Hopf manifolds, Kodaira surfaces,  Kato surfaces, some Oeljeklaus-Toma manifolds etc. All complex submanifolds of LCK manifolds are LCK. See \eg \cite{do}, \cite{ov_jgp} and the bibliography therein.

The LCK condition is conformally invariant: if $g$ is LCK with Lee form $\theta$, then $e^fg$  is LCK with Lee form $\theta+df$. One can then speak about an {\em LCK structure} on $(M,J)$ given by the couple $([g], [\theta])$. where $[,]$ denotes conformal class, respectively de Rham cohomology class. 

The complex structure $J$ is parallel with respect to the Weyl connection $D$ associated to $\theta$ and $[g]$ (acting by $Dg=\theta\otimes g$). This means that $D$ is, in fact, obtained by gluing the Levi-Civita connections of the local K\"ahler metrics $g'_U$ and therefore, the Levi-Civita connection of the K\"ahler metric on $\tilde M$ is the pull-back of the Weyl connection on $M$.

On a compact LCK manifold, if the local K\"ahler metrics are Einstein, a wellknown result by Gauduchon, \cite{gau_wein}, says that they are in fact Ricci flat. In this case,  the LCK metric itself is called {\em Einstein-Weyl} (see \cite{pps}, \cite{ov_ew}) and has the property that in the conformal class of $g$ there exists a metric with parallel Lee form, unique up to homotheties, known in the literature as Vaisman (see \cite{gau_wein}).

A particular example of Einstein-Weyl metric is {\em locally conformally hyperk\"ahler} (LCHK). In this case, $\dim_\R M=2n$ is divisible by 4, and $M$ admits a hyperhermitian 
structure $(I,J,K, g)$ such that all three Hermitian couples $(g,I)$, $(g,J)$, $(g,K)$ are LCK with respect to the same Lee form $\theta$. The K\"ahler metric of the universal cover has then holonomy included in $\Sp{\frac{n}{2}}$, thus being Calabi-Yau. See \eg \cite{cab_sw}, \cite{op}, \cite{pps}, \cite{V}. The quaternionic Hopf manifold is an important example. A complete list of compact, homogeneous LCHK manifolds is given in \cite{op}. 

One easily verifies that on compact LCHK manifolds, the 2-form $\Omega:=\omega_J+\1 \omega_K$ is nondegenerate and produces a volume form by $\Omega^{\frac{n}{2}} \wedge \overline{\Omega}^{\frac{n}{2}}=c \cdot d\mathrm{vol}_g$, for a positive constant $c$. 
Moreover, clearly $\Omega$ satisfies the equation $d\Omega=\theta \wedge \Omega$. The aim of this note is to prove that these conditions are also sufficient to define an LCHK structure:

\begin{theorem}\label{main} Let $(M, J, g)$ be a compact locally conformally K\"ahler manifold of real dimension $2n$ and $\theta$ the Lee form of $g$. Then $g$ is locally conformally hyperk\" ahler if and only if there exists a non-degenerate $(2, 0)$-form $\Omega$  such that 
$$d\Omega=\theta \wedge \Omega \qquad \text{and}\quad \Omega^{\frac{n}{2}} \wedge \overline{\Omega}^{\frac{n}{2}}=c \cdot d\mathrm{vol}_g,$$ where $c \in \mathbb{R}_{+}$, and $d\mathrm{vol}_g$ is the volume form of $g$.
\end{theorem}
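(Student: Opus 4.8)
The "only if" direction is explained in the paragraph preceding the theorem, so the content is the "if" direction. The plan is to start from the given data: a compact LCK manifold $(M,J,g)$ with closed Lee form $\theta$, together with a nowhere-degenerate $(2,0)$-form $\Omega$ satisfying $d\Omega = \theta\wedge\Omega$ and the volume normalization $\Omega^{n/2}\wedge\bar\Omega^{n/2} = c\,d\mathrm{vol}_g$. The strategy is to pass to the universal cover $\pi\colon\tilde M\to M$. There the pulled-back LCK metric becomes globally conformal to a \ka{} metric $g_K = e^{-f}\pi^*g$ with $df = \pi^*\theta$, and the Weyl connection $D$ pulls back to the Levi-Civita connection $\nabla^K$ of $g_K$. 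The form $\tilde\Omega := e^{-f}\pi^*\Omega$ then satisfies $d\tilde\Omega = 0$ (a direct computation using $d\Omega = \theta\wedge\Omega$), so $\tilde\Omega$ is a closed, nowhere-degenerate $(2,0)$-form on the \ka{} manifold $(\tilde M, J, g_K)$. Because $\tilde M$ is \ka{}, closedness of the $(2,0)$-form $\tilde\Omega$ forces it to be \emph{holomorphic} (its $\bar\partial$, being the $(1,2)$-part of $d\tilde\Omega=0$, vanishes). Thus $(\tilde M, g_K)$ is a \ka{} manifold carrying a holomorphic symplectic form — but $\tilde M$ is noncompact, so Beauville's theorem does not apply directly, and this is the crux.

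**Getting holonomy reduction from Bochner/Weitzenböck on $M$.** The way around noncompactness of $\tilde M$ is to do the hard analysis downstairs on the compact manifold $M$, working with the Weyl connection. The object $\Omega$ (or rather its conformal-weight-adjusted version, a section of $\Lambda^{2,0}M\otimes L$ where $L$ is the weight bundle with connection $\theta$) is $D$-harmonic in the appropriate sense precisely because of the two hypotheses: $d\Omega = \theta\wedge\Omega$ controls its "twisted $d$" and the volume normalization will be used to control its "twisted codifferential." The plan is to establish that $\Omega$ is in fact $D$-parallel. For this I would write down the Weitzenböck formula for the Weyl-Laplacian acting on $L$-valued $(2,0)$-forms, integrate over the compact $M$, and show the curvature term is non-negative with the analytic term forcing $D\Omega = 0$; here one must use that the local \ka{} metrics are Einstein—equivalently Ricci-flat by Gauduchon's theorem \cite{gau_wein}—which itself needs to be \emph{derived} from the existence of $\Omega$. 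So an intermediate step is: the existence of a nowhere-zero holomorphic $(n/2,0)$-form on $\tilde M$ (namely $\tilde\Omega^{n/2}$, essentially the pulled-back canonical-bundle trivialization coming from the volume normalization) shows that the Ricci form of $g_K$ is $\bar\partial\partial$-exact of a special type; combined with compactness of $M$ and Gauduchon's argument, one concludes $g_K$ is Ricci-flat, hence $(M,[g],[\theta])$ is Einstein-Weyl and has a Vaisman representative.

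**From parallel $\Omega$ to the hyperkähler triple.** Once $\tilde\Omega = e^{-f}\pi^*\Omega$ is parallel for $\nabla^K$ (equivalently $\Omega$ is $D$-parallel, viewed with its $L$-weight), split it into real and imaginary parts: $\tilde\Omega = \omega_2 + \sqrt{-1}\,\omega_3$ where $\omega_2,\omega_3$ are parallel real $2$-forms on $(\tilde M, g_K)$. Using the volume normalization to fix the scale, $\omega_2$ and $\omega_3$ are nondegenerate and of type $(2,0)+(0,2)$ with respect to $J$; define endomorphisms $I$ and $K$ by $g_K(I\,\cdot,\cdot) = \omega_2$ and $g_K(K\,\cdot,\cdot) = \omega_3$. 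The parallelism of $\omega_2,\omega_3$ and $g_K$ makes $I,K$ parallel; the normalization (up to rescaling $\Omega$ by a constant, absorbed in $c$) forces $I^2 = K^2 = -\mathrm{Id}$ and $IJ = -JI$, $IJ = K$, i.e. a quaternionic structure. Hence the holonomy of $\nabla^K$ lies in $\Sp{n/2}$, so $(\tilde M, I, J, K, g_K)$ is hyper\ka{}. The deck group acts by $g_K$-homotheties preserving $J$; one checks it also preserves $I$ and $K$ up to the homothety factor (because $\tilde\Omega$ is the pullback of a global object $\Omega$ on $M$, the triple descends), so $(M, I, J, K, g)$ is an LCHK structure with Lee form $\theta$, as required.

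**Main obstacle.** The principal difficulty is the noncompactness of $\tilde M$: Beauville's theorem is unavailable, so everything analytic — the Weitzenböck/Bochner argument yielding $D\Omega = 0$, and the invocation of Gauduchon's theorem to get Ricci-flatness of the local \ka{} metrics — must be carried out on the compact base $M$ using the Weyl connection $D$ and $L$-valued forms, keeping careful track of conformal weights so that the relevant integrands are genuine densities on $M$. Getting the curvature term in the Weyl-Weitzenböck formula to have the right sign, and confirming that the two hypotheses on $\Omega$ are exactly what is needed to kill all the error terms, is where the real work lies.
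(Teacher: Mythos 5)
Your outline follows the same route as the paper: pass to the universal cover, note that $\tilde\Omega=e^{-f}\pi^*\Omega$ is closed and hence holomorphic, deduce Ricci-flatness of the K\"ahler metric upstairs from the volume normalization (the norm of $\tilde\Omega^{\frac n2}$ --- which, note, is an $(n,0)$-form, not an $(n/2,0)$-form --- is the constant $c$, so the Ricci form $-\mathrm{i}\,\partial\bar\partial\log\Vert\tilde\Omega^{\frac n2}\Vert^2$ vanishes outright; no appeal to Gauduchon is needed at this stage), and then prove $D\Omega=\theta\otimes\Omega$ by a Bochner-type argument on the compact $M$, concluding by the holonomy principle that the holonomy lies in $\Sp{\frac n2}$. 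However, the step you defer as ``where the real work lies'' is precisely the content of the theorem, and the mechanism you propose for it --- ``show the curvature term is non-negative with the analytic term forcing $D\Omega=0$'' --- is not how the argument can close. Since $\tilde g$ is Ricci-flat the curvature term vanishes identically; the genuine obstruction is that the Weyl connection is not metric for the induced Hermitian structure ($Dh=-2\theta\otimes h$), so the operator whose kernel is needed, $\mathcal{D}=D-\theta\,\otimes$, is not the rough Laplacian of a metric connection and the Weitzenb\"ock identity descended from $\tilde M$ does not read $\mathcal{D}^*\mathcal{D}\Omega=0$. In the paper one computes the adjoint $\mathcal{D}^*$ explicitly and obtains $\mathcal{D}^*\mathcal{D}\,\Omega=(n-1)(\Omega-D_{\theta^\sharp}\Omega)$, which is not manifestly zero; to dispose of it after integration one must first fix the conformal gauge to be the Vaisman metric (available because Einstein--Weyl has already been established), so that $\theta$ is harmonic of unit norm and $\theta^\sharp$ is Killing, and then combine Stokes with $Dh=-2\theta\otimes h$ to get the integral identity $\int_M h(\Omega,\Omega)\,d\mathrm{vol}_g=\int_M h(D_{\theta^\sharp}\Omega,\Omega)\,d\mathrm{vol}_g$. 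Neither the gauge choice nor this identity appears in your plan, and without them the Bochner argument does not terminate; ``keeping track of conformal weights'' alone will not produce them.

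A second, more minor, gap is in your final step: for $I$ defined by $g_K(I\,\cdot,\cdot)=\mathrm{Re}\,\tilde\Omega$, the pointwise normalization $\Omega^{\frac n2}\wedge\overline{\Omega}^{\frac n2}=c\cdot d\mathrm{vol}_g$ does not force $I^2=-\mathrm{Id}$ (it fixes only the product of the relative eigenvalues of $\tilde\Omega$ with respect to $g_K$, not their equality), so the claim that the quaternionic relations follow ``up to rescaling by a constant'' is unjustified as stated. One can repair this by a polar-decomposition normalization of $I$, or, as the paper does, simply invoke the holonomy principle: a K\"ahler metric admitting a parallel nondegenerate holomorphic two-form has holonomy contained in $\Sp{\frac n2}$, which already yields the hyperk\"ahler triple upstairs and its descent to an LCHK structure on $M$.
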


\begin{remark} Note that the existence of a nondegenerate $(2, 0)$-form implies that the real dimension is a multiple of 4.
\end{remark}

\begin{remark}
A complex manifold admitting a nondegenerate $(2, 0)$-form $\omega$ such that a closed one-form $\theta \in \Lambda^1(M, \C)$ exists and $d \omega = \theta \wedge \omega$, is called {\em complex locally conformally symplectic} (CLCS). The Lee form can be real or complex. CLCS manifolds   first appeared in \cite[Section 5]{lic1}, motivated by the  examples of even-dimensional leaves of the natural generalized foliation of a complex Jacobi manifolds (recall that real LCS structures also appear as leaves of real Jacobi manifolds). 
\end{remark}

Theorem A can then be viewed as a conformal version of the celebrated Beauville theorem stating that a compact K\" ahler manifold, admitting a holomorphic symplectic form is hyperk\"ahler, \cite{beauville}. Our proof follows the ideas of Beauville's, but is different, the main difficulty being the fact that the universal cover of a LCK manifold is K\"ahler, but never  compact, and hence one has to make a long detour to use the Weitzenb\"ock formula on the compact LCK manifold $M$.  Moreover, there is no analogue of Yau's theorem on LCK manifolds or non-compact K\" ahler manifolds, which is an essential ingredient in Beauville's proof for obtaining a K\"ahler Ricci flat metric. Nevertheless, the rather strong condition $\Omega^{\frac{n}{2}} \wedge \overline{\Omega}^{\frac{n}{2}}=c \cdot d\mathrm{vol}_g$ is meant to replace Yau's theorem and produce eventually the K\" ahler Ricci flat metric on the universal cover of $M$.

A generalization of Beauville's theorem, but in a different sense, namely when the manifold is compact K\" ahler, but admitting a twisted holomorphic form, is presented in \cite{nico}.

\section{Proof of Theorem A}

The following two lemmas will be used in the proof.

\begin{lemma}\label{ricciflat}
Let $(M, J, g)$ be a compact LCK manifold with a non-degenerate $(2, 0)$-form $\Omega$ such that $d\Omega=\theta \wedge \Omega$ and $\Omega^{\frac{n}{2}} \wedge \overline{\Omega}^{\frac{n}{2}}=c \cdot d\mathrm{vol}_g$, where $\theta$ is the Lee form of $g$, $c \in \mathbb{R}_{+}$ and $d\mathrm{vol}_g$ is the volume form of $g$. Then $g$ is Einstein-Weyl.
\end{lemma}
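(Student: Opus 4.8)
The plan is to show that the volume normalization forces each of the local \Ka{} metrics underlying the LCK structure to be Ricci flat; since being Einstein--Weyl means exactly that these local \Ka{} metrics are Einstein (equivalently, on a compact manifold, Ricci flat, by Gauduchon's theorem \cite{gau_wein}), this finishes the proof. I will argue locally, so as never to leave the compact manifold $M$.

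Around any point of $M$ choose an open set $U$ with $\theta\restrict{U}=df_U$, so that $g'_U:=e^{-f_U}g\restrict{U}$ is a \Ka{} metric with fundamental form $e^{-f_U}\omega\restrict{U}$, and put $\Omega'_U:=e^{-f_U}\Omega\restrict{U}$. Then
$$d\Omega'_U=e^{-f_U}\bigl(-df_U\wedge\Omega+d\Omega\bigr)\restrict{U}=e^{-f_U}\bigl(-\theta\wedge\Omega+\theta\wedge\Omega\bigr)\restrict{U}=0,$$
so $\Omega'_U$ is $d$-closed; being of type $(2,0)$ this also gives $\bar\partial\Omega'_U=0$, hence $\Omega'_U$ is holomorphic, and it is non-degenerate because $\Omega$ is. Thus $\Omega'_U$ is a holomorphic symplectic form on $(U,g'_U)$. (Gluing the $\Omega'_U$ produces a holomorphic symplectic form on the \Ka{} universal cover $\tilde M$, but as $\tilde M$ is non-compact I keep the argument local.)

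Now I bring in the volume hypothesis. Since $\dim_\R M=2n$, $d\mathrm{vol}_{g'_U}=e^{-nf_U}\,d\mathrm{vol}_g\restrict{U}$, whereas
$$(\Omega'_U)^{\frac{n}{2}}\wedge\overline{(\Omega'_U)}^{\frac{n}{2}}=e^{-nf_U}\bigl(\Omega^{\frac{n}{2}}\wedge\overline{\Omega}^{\frac{n}{2}}\bigr)\restrict{U}=c\,e^{-nf_U}\,d\mathrm{vol}_g\restrict{U}=c\,d\mathrm{vol}_{g'_U}.$$
The form $\eta_U:=(\Omega'_U)^{\frac{n}{2}}$ is a nowhere-vanishing holomorphic $(n,0)$-form (the Pfaffian of $\Omega'_U$), hence a local holomorphic trivialization of the canonical bundle, and the displayed identity says precisely that its pointwise $g'_U$-norm is constant on $U$. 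For any \Ka{} metric $h$ admitting a nowhere-vanishing holomorphic volume form $\eta$ one has, up to the usual normalization, $\Ric(h)=\1\partial\bar\partial\log\norm{\eta}^2_h$: writing $\eta=\psi\,dz_1\wedge\cdots\wedge dz_n$ locally with $\psi$ holomorphic, $\log\abs{\psi}^2$ is pluriharmonic, so $\partial\bar\partial\log\norm{\eta}^2_h$ is, up to sign, the Ricci form of $h$. Applying this with $h=g'_U$, $\eta=\eta_U$, the constancy of $\norm{\eta_U}_{g'_U}$ yields $\Ric(g'_U)=0$. As such $U$ cover $M$, every local \Ka{} metric of the LCK structure is Ricci flat, in particular Einstein, so $g$ is Einstein--Weyl.

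I do not expect a genuine obstacle in this lemma: it is the half in which the strong normalization $\Omega^{\frac{n}{2}}\wedge\overline{\Omega}^{\frac{n}{2}}=c\cdot d\mathrm{vol}_g$ does all the work, playing here the role that Yau's theorem plays in Beauville's argument --- it hands one the Ricci-flat \Ka{} metric directly, without having to solve a Monge--Amp\`ere equation (which one could not do on the non-compact cover). The only point needing mild care is the bookkeeping that converts $\eta\wedge\bar\eta=(\mathrm{const})\,\norm{\eta}^2_h\,d\mathrm{vol}_h$, together with the hypothesis, into the constancy of $\norm{\eta_U}_{g'_U}$; no estimates enter. The genuinely hard part of the paper --- reducing the holonomy of this Ricci-flat metric so as to reconstruct the full hyperhermitian structure --- is taken up afterwards, via the Weitzenb\"ock formula on $M$.
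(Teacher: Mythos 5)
Your proof is correct and takes essentially the same route as the paper: the paper performs the identical computation (the conformally rescaled $\Omega$ becomes a holomorphic symplectic form, the constancy of the norm of its top power kills $\mathrm{i}\,\partial\bar\partial\log$ of the canonical-bundle metric, hence the K\"ahler metric is Ricci flat), only carried out on the K\"ahler universal cover $\tilde M$ instead of on the local K\"ahler charts $U$, which is a purely cosmetic difference. The normalization point you flag is handled in the paper by noting that $*\bar\beta=\bar\beta$ because $n$ is even (citing \cite[Exercise 18.2.1]{moroianu}), so the hypothesis gives exactly $\tilde g(\tilde\Omega^{\frac n2},\tilde\Omega^{\frac n2})=c$.
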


\begin{proof}

Let $\tilde{M}$ be the universal cover of $M$, endowed with the complex structure $\tilde{J}=\pi^*J$, where $\pi: \tilde{M} \rightarrow M$. Let $\pi^*\theta=df$ and $\tilde{g}$ the K\"ahler metric given by $e^{-f}\pi^*g$. Denote by $\tilde{\Omega}:=e^{-f} \pi^*\Omega$. This is a $(2, 0)$-form which is closed, as a consequence of $d\Omega=\theta\wedge \Omega$, hence it is holomorphic.  Moreover, $\tilde{\Omega}^{\frac{n}{2}}\wedge \overline{\tilde{\Omega}^{\frac{n}{2}}}=c \cdot d\mathrm{vol}_{\tilde{g}}$, since $d\mathrm{vol}_{\tilde{g}}=e^{-nf}d\mathrm{vol}_g$.

Note that if instead of $g$ we consider the metric $g_1=e^f g$ with its corresponding Lee form $\theta_1=\theta+df$, then taking $\Omega_1=e^f \Omega$, we still obtain a non-degenerate form of type $(2, 0)$ satisfying $d\Omega_1=\theta_1 \wedge \Omega_1$ and $\Omega_1^{\frac{n}{2}} \wedge \overline{\Omega_1}^{\frac{n}{2}}=c \cdot d\mathrm{vol}_{g_1}$, therefore the statement of the lemma is conformally invariant.

Let $\tilde{K}$ be the canonical bundle of $\tilde{M}$. 
There is a natural Hermitian structure on $\tilde{K}$ which comes from $\tilde{g}$ given by $\alpha \wedge *\bar{\beta}=\tilde{g}(\alpha, \beta)\ d\mathrm{vol}_{\tilde{g}}$. Note that because $n$ is even, $*\overline{\beta}=\overline{\beta}$ (see \cite[Exercise 18.2.1]{moroianu}), thus, $\tilde{g}(\tilde{\Omega}^{\frac{n}{2}},\tilde{\Omega}^{\frac{n}{2}})=c$. The curvature form of the Chern connection associated to $\tilde{g}$ is on one hand $\mathrm{i}\,  \partial\overline{\partial}\log \det (\tilde{g}_{ij})$, where $\det \tilde{g}_{ij}=\det \tilde{g}(\frac{\partial}{\partial z_i},\frac{\partial}{\partial \overline{z_j}})$,  and on the other hand, it is $-\mathrm{i}\, \partial\overline{\partial} \log \tilde{g}(\tilde{\Omega}^{\frac{n}{2}},\tilde{\Omega}^{\frac{n}{2}})=0.$ Since $\mathrm{i}\,  \partial\overline{\partial}\log \det (\tilde{g}_{ij})$ is the local expression of the Ricci form $\rho(X, Y)= \mathrm{Ric}_{\tilde{g}}(\tilde{J}X, Y)$, we conclude that $\tilde{g}$ is Ricci flat and hence, $g$ is Einstein-Weyl. 
In particular, in the conformal class of $g$, there exists a Vaisman metric, unique up to homotheties.
 \end{proof}
\begin{lemma}\label{relWeyl}
Let $h$ be the Hermitian structure induced by $g$ on $\Lambda^{2,0}T^*_{\C}M$ (that is $h(\omega, \eta)=g(\omega, \overline{\eta})$). The Weyl connection $D$ on $M$ satisfies $D h=-2\theta \otimes h$.
\end{lemma}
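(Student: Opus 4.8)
The plan is to compute how the Weyl connection $D$ acts on the Hermitian metric $h$ induced on $\Lambda^{2,0}T^*_\C M$, starting from the defining relation $Dg = \theta\otimes g$ and tracking how this relation propagates to tensor powers and then to $\Lambda^{2,0}$. Recall that if $Dg = \theta\otimes g$, then on the dual bundle $T^*M$ one gets $Dg^* = -\theta\otimes g^*$, since the induced metric on the cotangent bundle is the inverse, and differentiating the pairing $\langle \alpha^\sharp, \alpha\rangle$ shows the sign flips. Extending to $\Lambda^2 T^*M$ via the Leibniz rule: for a decomposable form $\alpha\wedge\beta$, the induced inner product is $g^*(\alpha,\gamma)g^*(\beta,\delta) - g^*(\alpha,\delta)g^*(\beta,\gamma)$ (up to normalization), each factor of which contributes a $-\theta$, so $D$ acts on $\Lambda^2 T^*M$ by $-2\theta\otimes(\,\cdot\,)$ on the metric. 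Since $J$ is $D$-parallel, the decomposition $\Lambda^2 T^*_\C M = \Lambda^{2,0}\oplus\Lambda^{1,1}\oplus\Lambda^{0,2}$ is $D$-invariant, so the same computation restricts to $\Lambda^{2,0}$.

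Concretely, I would argue as follows. First I would record that $D$ being a metric connection for the conformal class means $D_X g = \theta(X)\, g$, and that for any $D$-parallel operation built functorially from $g$ (tensor product, duality, wedge) the induced connection on the associated bundle satisfies an analogous equation with a coefficient determined by the "weight" of the construction. For $\Lambda^{2,0}T^*_\C M$, each of the two cotangent factors carries weight $-1$ relative to $g$, so the total weight is $-2$, giving $Dh = -2\theta\otimes h$. To make this rigorous without invoking weight bundles, I would pick a local $D$-parallel frame is not available (Weyl connections generally have curvature), so instead I would differentiate the identity $h(\omega,\eta)\, d\mathrm{vol}$ or work pointwise: choose any local sections $\omega,\eta$ of $\Lambda^{2,0}$, and compute $X\big(h(\omega,\eta)\big) = (D_X h)(\omega,\eta) + h(D_X\omega,\eta) + h(\omega, D_X\eta)$, then express $h$ in terms of $g$ on the underlying real forms and use $D_X g = \theta(X)g$ together with $DJ = 0$ to collect the coefficient $-2\theta(X)$.

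The one genuinely delicate point is bookkeeping the sign and the factor of $2$ correctly, in particular the passage from $g$ on $TM$ to the induced metric on $T^*M$ (where the sign of the $\theta$-term flips) and then the accumulation of two such factors in $\Lambda^2$; one must also be careful that $h(\omega,\eta) = g(\omega,\bar\eta)$ uses the complex-bilinear extension of $g$, and that complex conjugation commutes with $D$ because $D$ is a real connection and $J$ is $D$-parallel (so $D$ preserves the conjugation of the type decomposition). Once these are set straight, the identity $Dh = -2\theta\otimes h$ follows directly, and I expect the whole argument to be short — essentially a functoriality computation — with the only obstacle being notational care rather than any conceptual difficulty.
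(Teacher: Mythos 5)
Your argument is correct, but it takes a different route from the paper. The paper's proof is a one-line conformal rescaling argument on the universal cover: since $\pi^*D$ is the Levi-Civita connection of the K\"ahler metric $\tilde g=e^{-f}\pi^*g$, the induced Hermitian structure on $\Lambda^{2,0}$ upstairs, $\tilde h=e^{2f}\pi^*h$, is parallel, and differentiating $e^{2f}\pi^*h$ and descending gives $Dh=-2\theta\otimes h$. You instead stay on $M$ and propagate $D_Xg=\theta(X)g$ functorially: the sign flips on the dual metric ($Dg^*=-\theta\otimes g^*$), two cotangent factors accumulate to $-2\theta$ on $\Lambda^2T^*M$, and $DJ=0$ (together with $D$ being real, so it commutes with conjugation) lets you restrict to the $D$-invariant subbundle $\Lambda^{2,0}$ and to the sesquilinear $h(\omega,\eta)=g(\omega,\bar\eta)$. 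Both arguments are short and both hinge on the same weight count; the paper's version buys brevity by reusing the dictionary $\pi^*D=\nabla^{\tilde g}$ already set up for the rest of the proof of Theorem A, while yours is more self-contained (no covering space needed, only the Weyl condition and $DJ=0$) and makes the conformal weight of $\Lambda^{2,0}T^*_\C M$ explicit --- which is exactly the mechanism hidden in the identity $\tilde h=e^{2f}\pi^*h$ that the paper asserts without computation. The only caution is the one you already flag: $D$-parallel frames are unavailable, so the pointwise Leibniz computation must be done with arbitrary local sections, and the factor $-2$ must be tracked through the dualization and the wedge; your bookkeeping of these signs is right.
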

\begin{proof}

This is because the Hermitian structure $\tilde{h}$ induced by $\tilde{g}$ on $\Lambda^{2, 0}_{\C}\tilde{M}$ is given by $e^{2f}\pi^*h$. Then $\pi^*D=\nabla^{\tilde{g}}$ (see \cite{do}) implies that $\pi^*D(e^{2f}\pi^*h)=0$, which yields $(\pi^*D)( \pi^*h)=-2\pi^*\theta \otimes \pi^*h$ and our relation follows.
\end{proof}

\hfill

We proceed with the proof of Theorem A.

In terms of holonomy, to say that $g$ is locally conformally hyperk\" ahler is the same as proving that the holonomy of $\tilde{g}$ on $\tilde{M}$ is contained in $\Sp{\frac{n}{2}}$, which is equivalent to $\tilde{g}$ being hyperk\" ahler. According to the holonomy principle (see \eg \cite[Page 758]{beauville}), proving that the holonomy of $\tilde{g}$ is in $Sp(\frac{n}{2})$ is equivalent to the existence of a complex structure $\tilde{J}$ on $\tilde{M}$, with respect to which $\tilde{g}$ is K\"ahler and a holomorphic two-form $\tilde{\Omega}$, parallel with respect to the Levi-Civita connection of $\tilde{g}$, $\nabla^{\tilde{g}}$. We are going to prove that these are $\tilde{J}$ and $\tilde{\Omega}$ from the proof of \ref{ricciflat} and we already saw that $\tilde{\Omega}$ is holomorphic.
The non trivial part is then to prove:

\begin{lemma} 
$\tilde{\Omega}$ is $\tilde g$-parallel. 
\end{lemma}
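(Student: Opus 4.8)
The goal is to show that the holomorphic two-form $\tilde\Omega$ on the (noncompact) Kähler universal cover $\tilde M$ is parallel for $\nabla^{\tilde g}$. Since $\tilde\Omega$ is holomorphic and $\tilde g$ is Ricci-flat (Lemma \ref{ricciflat}), the classical Bochner–Kodaira–Weitzenböck formula on a Kähler manifold gives, for a holomorphic $(2,0)$-form, $\nabla^*\nabla\tilde\Omega = 0$ pointwise — the curvature term in the Weitzenböck identity for $(p,0)$-forms involves only the Ricci tensor, which vanishes. So on $\tilde M$ we get $\langle \nabla^*\nabla\tilde\Omega,\tilde\Omega\rangle = 0$; the obstacle is that $\tilde M$ is \emph{not} compact, so we cannot simply integrate this and conclude $\nabla\tilde\Omega = 0$ globally. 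The idea, following the ``long detour'' alluded to in the introduction, is to push the relevant identity down to the compact manifold $M$ and integrate there.

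First I would transport everything to $M$. Set $\Omega$ on $M$ with $\tilde\Omega = e^{-f}\pi^*\Omega$, and use Lemma \ref{relWeyl}, which says the Weyl connection $D$ (whose pullback is $\nabla^{\tilde g}$) satisfies $Dh = -2\theta\otimes h$ on $\Lambda^{2,0}T^*_\C M$. The pointwise identity $\nabla^*\nabla\tilde\Omega = 0$ on $\tilde M$ translates into an identity on $M$ of the form $D^*D\Omega = (\text{terms linear in }\theta,\ d\theta,\ \text{and }D\Omega)$, where $D^*$ is the formal adjoint taken with respect to the Weyl connection and the conformally-twisted inner product $h$; the exponential factor $e^{-f}$ converts $\nabla^{\tilde g}$-derivatives into $D$-derivatives plus $\theta$-correction terms. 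Then I would form the real-valued function $\psi := h(\Omega,\Omega) = |\Omega|^2_g$ on $M$ (a genuine function, since $\pi^*\psi = e^{2f}|\tilde\Omega|^2_{\tilde g}$ is deck-invariant, and indeed $\psi$ is essentially constant up to the normalization coming from $\tilde g(\tilde\Omega^{n/2},\tilde\Omega^{n/2})=c$ — one must be slightly careful here, as $h(\Omega,\Omega)$ need not itself be constant, only the top wedge is). Apply the Weyl Laplacian / Bochner identity to $\psi$ (or better, to a suitable conformal weight $e^{a f}|\tilde\Omega|^2$ descending to $M$), expand using $D^*D\tilde\Omega = 0$ and $Dh = -2\theta\otimes h$, and obtain an inequality of the shape
$$\Delta_D\, \Phi \;=\; -\,|D\tilde\Omega|^2 \;+\; (\text{divergence terms in }\theta),$$
where $\Phi$ is a nonnegative function on the compact manifold $M$. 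Integrating over $M$ and using that the Gauduchon/Vaisman metric in the conformal class makes $\theta$ parallel (hence $\delta\theta$ and the error integrals controllable — this is where compactness and Lemma \ref{ricciflat} are used in tandem), the divergence terms integrate to zero and we are forced to conclude $D\tilde\Omega \equiv 0$, i.e. $\nabla^{\tilde g}\tilde\Omega = 0$ on $\tilde M$.

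Concretely the steps are: (1) write the Kähler Weitzenböck formula for the holomorphic form $\tilde\Omega$ on $\tilde M$ and use $\Ric_{\tilde g}=0$ to get $\nabla^*\nabla\tilde\Omega=0$; (2) using $\pi^*D=\nabla^{\tilde g}$ and the conformal factor $e^{-f}$, rewrite this as an equation on $M$ purely in terms of $D$, $h$, $\theta$, and $\Omega$; (3) pass to the Vaisman gauge in $[g]$ (legitimate by the conformal invariance noted in Lemma \ref{ricciflat}), where $\nabla^{\text{LC}}\theta = 0$, so the twist terms simplify dramatically; (4) apply the maximum principle or integration by parts on the compact $M$ to the nonnegative function $|\tilde\Omega|^2$ (appropriately weighted to descend), killing the divergence terms; (5) conclude $D\tilde\Omega = 0$, hence by the holonomy principle $\tilde g$ is hyperkähler with holonomy in $\Sp{\frac n2}$, which is exactly the LCHK condition.

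The main obstacle I anticipate is step (2)–(4): carefully bookkeeping the conformal-weight and $\theta$-correction terms so that, after choosing the Vaisman gauge, the boundary/divergence contributions on $M$ genuinely vanish upon integration rather than leaving an uncontrolled sign. In Beauville's compact-Kähler setting one integrates $0 = \int_{\tilde M}\langle\nabla^*\nabla\tilde\Omega,\tilde\Omega\rangle = \int_{\tilde M}|\nabla\tilde\Omega|^2$ directly; here the whole subtlety is manufacturing a compactly-supported (in fact globally defined on $M$) version of this identity, and the strong hypothesis $\Omega^{n/2}\wedge\overline\Omega^{n/2} = c\cdot d\mathrm{vol}_g$ — equivalently the constancy of $\tilde g(\tilde\Omega^{n/2},\tilde\Omega^{n/2})$ — is precisely what makes the relevant weighted norm behave well enough for this to close.
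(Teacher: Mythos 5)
Your plan is correct and follows the paper's skeleton through its first three stages: pointwise Weitzenb\"ock on $\tilde M$ with the curvature term killed by Ricci-flatness (Lemma \ref{ricciflat}), descent of the resulting identity to the compact $M$ via $\pi^*D=\nabla^{\tilde g}$ and $\tilde\Omega=e^{-f}\pi^*\Omega$, and the choice of the Vaisman gauge. Where you genuinely diverge is the endgame: the paper stays tensorial, introducing $\mathcal{D}:=D-\theta\,\otimes$, computing its formal adjoint from $Dh=-2\theta\otimes h$ (Lemma \ref{relWeyl}), deriving $\mathcal{D}^*\mathcal{D}\Omega=(n-1)(\Omega-D_{\theta^{\sharp}}\Omega)$, and then showing $\int_M h(\Omega,\Omega)\,d\mathrm{vol}_g=\int_M h(D_{\theta^{\sharp}}\Omega,\Omega)\,d\mathrm{vol}_g$ using that $\theta^{\sharp}$ is Killing; you instead propose a scalar Bochner argument on the norm function. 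Your route does close, and more cleanly than you anticipate: since $\tilde h=e^{2f}\pi^*h$ and $\tilde\Omega=e^{-f}\pi^*\Omega$, the function $|\tilde\Omega|^2_{\tilde g}$ is already deck-invariant and equals $\pi^*\bigl(h(\Omega,\Omega)\bigr)$ (your formula $\pi^*\psi=e^{2f}|\tilde\Omega|^2_{\tilde g}$ has a spurious factor), while $\nabla^{\tilde g}\tilde\Omega=e^{-f}\pi^*(D\Omega-\theta\otimes\Omega)$, so $e^{-f}|\nabla^{\tilde g}\tilde\Omega|^2_{\tilde g}$ descends to $|D\Omega-\theta\otimes\Omega|^2$. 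Then $(\nabla^{\tilde g})^*\nabla^{\tilde g}\tilde\Omega=0$ gives $\tfrac12\Delta_{\tilde g}|\tilde\Omega|^2=|\nabla^{\tilde g}\tilde\Omega|^2$ (analyst's sign), and the conformal change of the Laplacian ($\tilde g=e^{-f}\pi^*g$, real dimension $2n$) turns this, on $M$, into $\Delta_g\psi-(n-1)\,\theta(\grad_g\psi)=2\,|D\Omega-\theta\otimes\Omega|^2$ with $\psi=h(\Omega,\Omega)$; in the Vaisman (indeed already Gauduchon) gauge $\delta^g\theta=0$, so $\int_M\theta(\grad_g\psi)\,d\mathrm{vol}_g=\int_M\psi\,\delta^g\theta\,d\mathrm{vol}_g=0$ and integration (or the strong maximum principle applied to the drift operator $\Delta_g-(n-1)\theta(\grad_g\cdot)$, which has no zeroth-order term) forces $D\Omega=\theta\otimes\Omega$, i.e.\ $\nabla^{\tilde g}\tilde\Omega=0$. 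Comparing the two: your scalar route avoids computing $\mathcal{D}^*$ and needs only coclosedness of $\theta$, whereas the paper's operator-level computation makes the exact cancellation coming from the Killing field $\theta^{\sharp}$ explicit; the price of your version is that the displayed identity you state only ``of the shape'' must actually be derived with the conformal bookkeeping above (in particular the coefficient and sign of the $\theta(\grad_g\psi)$ term), since that is the only place the argument could go wrong.
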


\begin{proof} Since $\nabla^{\tilde{g}}=\pi^*D$, $\nabla^{\tilde{g}}\tilde{\Omega}=0$ is equivalent to 
\begin{equation}\label{relatiasuprema}
D\Omega=\theta \otimes \Omega.
\end{equation}

This is a conformally invariant relation on $M$: for  any smooth $f$ on $M$ we have
\begin{equation*}
D(e^f\Omega)=(\theta + df) \otimes e^f\Omega. 
\end{equation*}
Using thus the freedom of choosing any metric in the conformal class, with the corresponding change of $\theta$ and $\Omega$, we choose the Vaisman metric, unique up to homotheties and without loss of generality we assume it is $g$. In this case, the Lee form is harmonic, has constant norm and moreover, we can choose the Vaisman metric with the Lee form of norm 1. We shall use these facts in the following computations.

We apply the Weitzenb\" ock formula on the holomorphic form $\tilde\Omega$. According to \cite{moroianu} (see Theorem 20.2 and the beginning of the proof of Theorem 20.5), as $\tilde{g}$ is Ricci flat (by  \ref{ricciflat}) the curvature term vanishes identically and the Weitzenb\" ock formula reduces to:
$$(\nabla^{\tilde{g}})^*\nabla^{\tilde{g}}\, \tilde{\Omega}=0.$$
 However, $\tilde{M}$ is not compact and we cannot deduce by integration that $\nabla^{\tilde{g}} \tilde{\Omega}=0$. 

For simplicity, from now on we write  $\nabla$ for $\nabla^{\tilde{g}}$.
By \cite[Lemma 20.1]{moroianu},
\begin{equation}\label{dual}
\sum_{i=1}^{2n}\nabla^*\nabla\tilde{\Omega}=\nabla_{\nabla_{f_i}f_i} \tilde{\Omega} - \nabla_{f_i}\nabla_{f_i}\tilde{\Omega},
\end{equation}
where $\{f_i\}$ is a local $\tilde{g}$ - orthonormal frame. We can choose $f_i=e^{\frac{f}{2}}\pi^* e_i$, where $\{e_i\}$ is a local $g$ - orthonormal frame  on $M$.  Then \eqref{dual} implies 
\begin{equation*}
\sum_{i=1}^{2n}\nabla^*\nabla\tilde{\Omega}=e^f(\nabla_{\nabla_{\pi^*e_i}\pi^*e_i} \tilde{\Omega} - \nabla_{\pi^*e_i}\nabla_{\pi^*e_i}\tilde{\Omega}),
\end{equation*}
and hence
\begin{equation*}
\sum_{i=1}^{2n}\nabla_{\nabla_{\pi^*e_i}\pi^*e_i} \tilde{\Omega} - \nabla_{\pi^*e_i}\nabla_{\pi^*e_i}\tilde{\Omega}=0.
\end{equation*}
Writing now $\tilde{\Omega}=e^{-f}\pi^*\Omega$, the above relation gives:
\begin{equation}\label{lung}
\begin{split}
0&=\sum_{i=1}^{2n}e^{-f}(\nabla_{\nabla_{\pi^*e_i}\pi^*e_i} \pi^*\Omega - \nabla_{\pi^*e_i}\nabla_{\pi^*e_i}\pi^*\Omega-\pi^*\theta(\nabla_{\pi^*e_i} \pi^*e_i)\pi^*\Omega \\
&+ 2\pi^*\theta(\pi^*e_i)\nabla_{\pi^*e_i}\pi^*\Omega 
 - (\pi^*\theta(\pi^*e_i))^2\pi^*\Omega 
 +\pi^*e_i(\pi^*\theta(\pi^*e_i))\pi^*\Omega).
 \end{split}
\end{equation}
As $\nabla=\pi^*D$, \eqref{lung} descends on $M$ to the following equality:
\begin{equation}\label{peM}
\sum_{i=1}^{2n}D_{D_{e_i}e_i}\Omega - D_{e_i}D_{e_i}\Omega-\theta(D_{e_i} e_i)\Omega + 2\theta(e_i)D_{e_i}\Omega - (\theta(e_i))^2\Omega +
 e_i(\theta(e_i))\Omega=0
\end{equation} 
We notice that $\sum_{i=1}^{2n}(\theta(e_i))^2\Omega=\Vert\theta\Vert^2_g\Omega = \Omega$. Recall (see \cite{do}) that 
\begin{equation}\label{weyl}
D = \nabla^g - \frac{1}{2}(\theta \otimes \mathrm{id}+ \mathrm{id} \otimes \theta - g \otimes \theta^{\sharp}).
\end{equation}
Using that $\theta$ is harmonic and \eqref{weyl}, we get:
\begin{equation*}
0=-\delta^g\theta = \sum_{i=1}^{2n}e_i(\theta(e_i)) - \theta(\nabla^g_{e_i}e_i)\\
=\sum_{i=1}^{2n}(e_i(\theta(e_i))- \theta(D_{e_i}e_i)) + n-1.
\end{equation*}
Consequently, \eqref{peM} rewrites as:
\begin{equation}\label{peMsimplificat}
\sum_{i=1}^{2n}D_{D_{e_i}e_i}\Omega - D_{e_i}D_{e_i}\Omega + 2\theta(e_i)D_{e_i}\Omega = n\Omega.
\end{equation}
The goal is to prove that $\nabla\tilde{\Omega}=0$, that is $D\Omega=\theta \otimes \Omega$. Hence, if we define $\cad:= D - \theta\, \otimes$, we need to show that $\cad \Omega=0$. If $\cad^*$ is the adjoint of $\cad$, as $M$ is compact, $\cad \Omega=0$ will follow from:
\begin{equation*}
\int_M h(\cad^*\cad\, \Omega, \Omega)d\mathrm{vol}_g = 0.
\end{equation*}
In order to find an explicit expression of $\cad^*$, we use the same method as in the proof of  \cite[Lemma 20.1]{moroianu}. Let $\eta \otimes \sigma \in \Gamma(\Lambda^1_\C \otimes \Lambda^{2, 0}_{\C})$ and $s \in \Omega^{2, 0}(M)$. Define the one-form $\alpha(X):=h(\eta(X)\sigma, s)$.
Then taking a $\nabla^g$-parallel local frame $\{e_i\}$, using \ref{relWeyl} and the fact that $\theta$ is real, we derive:
\begin{equation*}
\begin{split}
-\delta^g \alpha & = \sum_{i=1}^{2n}e_i(\alpha(e_i))= \sum_{i=1}^{2n}e_i(h(\eta(e_i)\sigma, s)) \\
& = \sum_{i=1}^{2n}(D_{e_i}h)(\eta(e_i)\sigma, s)+ h(D_{e_i}(\eta(e_i)\sigma), s) + h(\eta(e_i)\sigma, D_{e_i}s)\\
& = \sum_{i=1}^{2n}(-2 \theta(e_i)h(\eta(e_i)\sigma, s) + h(e_i(\eta(e_i))\sigma, s)+h(\eta(e_i)D_{e_i}\sigma, s)) + h(\eta \otimes \sigma, Ds)\\
& = h(D_{\eta^{\sharp}}\sigma -(\delta^g \eta)\sigma, s) + h(\eta \otimes \sigma, Ds-2\theta \otimes s).
\end{split}
\end{equation*}
After integration on $M$, this implies:
\begin{equation*}
\int_M h(\eta \otimes \sigma, Ds-2\theta \otimes s)d\mathrm{vol}_g= \int_M h(-D_{\eta^{\sharp}}\sigma +(\delta^g \eta)\sigma, s)d\mathrm{vol}_g
\end{equation*}
and hence, the adjoint of $D-2\theta \otimes$ acts as follows:
$$(D-2\theta \otimes)^*(\eta \otimes \sigma)=(\delta^g \eta)\sigma-D_{\eta^{\sharp}}\sigma.$$
We define 
$$T: \Gamma(\Lambda^{2,0}(M)) \rightarrow \Gamma(\Lambda^1_{\C}\otimes \Lambda^{2, 0}(M)), \quad \text{by}\quad T(s)=\theta \otimes s.$$
Then $\cad^*=(D-2\theta \otimes)^*+T^*$. It is easy to see that $T^*(\eta \otimes \sigma)=\eta(\theta^{\sharp})\sigma$. Therefore, 
\begin{equation}\label{adjoint}
\cad^*(\eta \otimes \sigma)= (\delta^g \eta)\sigma-D_{\eta^{\sharp}}\sigma + \eta(\theta^{\sharp})\sigma.
\end{equation}
Note that in the computation above we denoted  by $h$, too, the Hermitian structure on $\Lambda^1_{\C} \otimes \Lambda^{2,0}_{\C}M$ which is a product of the Hermitian structure induced by $g$ on $\Lambda^1_{\C}M$ and $h$ defined in \ref{relWeyl}.

We are ready now to compute $\cad^*\cad\, \Omega$. First:
\begin{equation*}
\cad^*\cad\, \Omega=\cad^*(D\Omega - \theta \otimes \Omega)=\cad^*D\Omega-\cad^*\theta\otimes\Omega.
\end{equation*}
Now \eqref{adjoint} yields:
\begin{equation}\label{adjoint2}
\cad^*\theta \otimes \Omega=(\delta^g \theta)\Omega-D_{\theta^{\sharp}}\Omega + \Omega = \Omega - D_{\theta^{\sharp}}\Omega.
\end{equation}
The first term is equal to:
\begin{equation*}
\cad^*D\, \Omega =\sum_{i=1}^{2n}\cad^*(e_i \otimes D_{e_i}\Omega)
=\sum_{i=1}^{2n}(\delta^g e_i)D_{e_i}\Omega-D_{e_i}D_{e_i}\Omega+ e_i(\theta^\sharp)D_{e_i}\Omega.
\end{equation*}
But  \eqref{weyl} implies:
$$\delta^g e_i=\sum_{k=1}^{2n}-e_k(e_i(e_k))+e_i(\nabla^g_{e_k} e_k)=\sum_{k=1}^{2n}g(D_{e_k}e_k, e_i)+(1-n)\theta(e_i),$$ 
and thus:
\begin{equation}\label{adjoint1}
\begin{split}
\cad^*D\Omega&=\sum_{i=1}^{2n}\left(\sum_{k=1}^{2n}g(D_{e_k}e_k, e_i)D_{e_i}\Omega + (1-n)\theta(e_i)D_{e_i}\Omega-D_{e_i}D_{e_i}\Omega\right) + D_{\theta^{\sharp}}\Omega\\
&=\sum_{i=1}^{2n}(D_{D_{e_i}e_i}\Omega-D_{e_i}D_{e_i}\Omega) + (2-n)D_{\theta^{\sharp}}\Omega
\end{split}
\end{equation}
Combining \eqref{adjoint1} and \eqref{adjoint2} we arrive at:
\begin{equation*}
\cad^*\cad \Omega = \sum_{i=1}^{2n}(D_{D_{e_i}e_i}\Omega -D_{e_i}D_{e_i}\Omega) + (3-n)D_{\theta^{\sharp}}\Omega - \Omega,
\end{equation*}
which, together with  \eqref{peMsimplificat}, leads to the final result:
\begin{equation}\label{d*d}
\cad^*\cad \Omega=(n-1)(\Omega-D_{\theta^{\sharp}}\Omega).
\end{equation} 
Integrating \eqref{d*d} on $M$ we find:
\begin{equation*}
\begin{split}
\int_M h(\cad\Omega, \cad\Omega)d\mathrm{vol}_g&=\int_M h(\cad^*\cad \Omega, \Omega)d\mathrm{vol}_g\\
&=(n-1)\left(\int_M h(\Omega, \Omega)d\mathrm{vol}_g-\int_Mh(D_{\theta^{\sharp}}\Omega, \Omega)d\mathrm{vol}_g\right).
\end{split}
\end{equation*} 
In particular, the above equality proves that  $\int_Mh(D_{\theta^{\sharp}}\Omega, \Omega)d\mathrm{vol}_g$ is a real number. Moreover,
\begin{equation*}
\int_M (D_{\theta^{\sharp}}h)(\Omega, \Omega)d\mathrm{vol}_g=\int_M \theta^{\sharp}(h(\Omega, \Omega))d\mathrm{vol}_g-\int_M h(D_{\theta^{\sharp}}\Omega, \Omega)d\mathrm{vol}_g-\int_M h(\Omega, D_{\theta^{\sharp}}\Omega)d\mathrm{vol}_g.
\end{equation*}
The first integral in the right hand side vanishes, since by Stokes formula and the fact that $\theta^{\sharp}$ is Killing (see \cite{do}):
\begin{equation*}
\int_M \theta^{\sharp}(h(\Omega, \Omega))d\mathrm{vol}_g = \int_M \mathcal{L}_{\theta^{\sharp}} (h(\Omega, \Omega)d\mathrm{vol}_g)-\int_M h(\Omega, \Omega) \mathcal{L}_{
\theta^{\sharp}} d\mathrm{vol}_g=0.
\end{equation*}
Using once more \ref{relWeyl} we derive:
\begin{equation*}
-2\int_M h(\Omega, \Omega)d\mathrm{vol}_g=-2\mathrm{Re}\int_M h(D_{\theta^{\sharp}}\Omega, \Omega)d\mathrm{vol}_g=-2\int_M h(D_{\theta^{\sharp}}\Omega, \Omega)d\mathrm{vol}_g,
\end{equation*}
which implies
$$\int_M h(\Omega, \Omega)d\mathrm{vol}_g-\int_Mh(D_{\theta^{\sharp}}\Omega, \Omega)d\mathrm{vol}_g=0,$$
 therefore 
 $$\int_M h(\cad\Omega, \cad\Omega)d\mathrm{vol}_g=0,$$ 
 and thus $\cad \Omega=0$, which completes the proof.
\end{proof}

\noindent{\bf Acknowledgment:} We thank Nicolina Istrati, Andrei Moroianu and Victor Vuletescu for carefully reading a first draft of the paper and for very useful comments and to Johannes Sch\" afer for insightful discussions that improved a previous version.



\begin{thebibliography}{100}

\bibitem[B]{beauville} A. Beauville, {\em Vari\'et\'ees k\"ahleriennes dont la premi\`ere classe de Chern est nulle}, J. Diff. Geom. {\bf 18} (1983), 755--782.

\bibitem[CS]{cab_sw} F. Cabrera, F. Swann, {\em Almost Hermitian structures and quaternionic geometries}, Diff. Geom. Appl. {\bf 21} (2004), 199--214.
	
\bibitem[DO]{do} S. Dragomir, L. Ornea, Locally conformally K\"ahler manifolds, Progress in Math. {\bf 55}, Birkh\"auser, 1998.

\bibitem[G]{gau_wein} P. Gauduchon, {\em Structures de Weyl-Einstein, \'espaces de twisteurs et vari\'et\'es de type $S^1\times S^3$},  J. reine angew. Math. {\bf 469} (1995), 1--50.

\bibitem[I]{nico} N. Istrati, {\em Twisted holomorphic symplectic forms}, Bull. London Math. Soc.,  {\bf 48} (5) (2016), 745--756.

\bibitem[L]{lic1} A. Lichnerowicz, \emph{Vari\'et\'es de Jacobi et
espaces homog\`enes de contact
complexes}, J. Math. Pures. Appl., {\bf 67} (1988), 131--173.


\bibitem[M]{moroianu} A. Moroianu, Lectures on K\"ahler geometry,  London Mathematical Society Student Texts {\bf 69}, Cambridge University Press, Cambridge, 2007.

\bibitem[OP]{op} L. Ornea, P. Piccinni, {\em Locally conformal K\"ahler structures in quaternionic geometry},  Trans. Amer. Math. Soc. {\bf 349} (1997), no. 2, 641--655.

\bibitem[OV1]{ov_ew} L. Ornea, M. Verbitsky, {\em Einstein--Weyl structures on complex manifolds and  conformal version of Monge--Amp\`ere equation}, Bull. Math. Soc. Sci. Math. Roumanie {\bf 51 (99)} No. 4, (2008), 339--353.

\bibitem[OV2]{ov_jgp} L. Ornea, M. Verbitsky,  {\em LCK rank of locally conformally K\"ahler manifolds with potential}, J. Geom. Phys. {\bf 107} (2016), 92--98.



\bibitem[PPS]{pps} H. Pedersen, Y.S. Poon, A. Swann, {\em The Einstein-Weyl Equations in Complex and Quaternionic Geometry}, 
Diff. Geom. Appl. {\bf 3} (1993), 309--321.

\bibitem[V]{V} M. Verbitsky, {\em Theorems on the vanishing of cohomology for locally conformally hyper-K\"ahler manifolds},  Proc. Steklov Inst. Math. no. 3 ({\bf 246}), 54--78 (2004).

\end{thebibliography}
\end{document}